\theoremstyle{definition}
\newtheorem{defn}{Definition}
\newtheorem{theorem}{Theorem}
\newtheorem{lemma}{Lemma}
\newtheorem{example}[theorem]{Example}
\DeclareMathOperator{\RE}{Re}
\newcommand{\te}{\theta}
\thanks {The research work of the last  author is supported by research fellowship from University Grants Commission (UGC), New Delhi.}
\date{no date}
\title[On Solution of]{On Solution of Second Order Complex Differential Equation}
\begin{document}
\author[D. Kumar, S. Kumar and M. Saini]{Dinesh Kumar, Sanjay Kumar and Manisha Saini} 
\address{Department of Mathematics, Deen Dayal Upadhyaya College, University of Delhi,
New Delhi--110 078, India }
\email{dinukumar680@gmail.com}

\address{Department of Mathematics, Deen Dayal Upadhyaya College, University of Delhi,
New Delhi--110 078, India }

\email{sanjpant@gmail.com}
\address{Department of Mathematics, University of Delhi,
Delhi--110 007, India}

\email{sainimanisha210@gmail.com }

\begin{abstract}
In this paper, we establish transcendental entire function $A(z)$ and polynomial $B(z)$ such that the differential equation $f''+A(z)f'+B(z)f=0$, has all non-trivial solution of infinite order. We use the notion of \emph{critical rays} of the function $e^{P(z)}$, where $A(z)=d(z)e^{P(z)}$ with some restrictions.
\end{abstract} 

\keywords{entire function, meromorphic function, order of growth, complex differential equation}

\subjclass[2010]{34M10, 30D35}

\maketitle

\section{Introduction}
Consider a second order linear differential equation of the form
\begin{equation}\label{sde}
f''+A(z)f'+B(z)f=0; \qquad \qquad  B(z) \not \equiv 0 
\end{equation}
where $A(z)$ and $B(z) $ are entire functions. It is well known that all solutions of the equation (\ref{sde}) are entire functions \cite{helle}, \cite{lainebook}. For an entire function $f,$ we define the order and lower order of $f$ as follows
$$ \rho(f) = \limsup_{r \rightarrow \infty} \frac{\log^{+} \log^{+} M(r, f)}{\log r} , \qquad \mu(f)=\liminf_{r \rightarrow \infty} \frac{\log^+ \log^+ M(r, f)}{\log r}$$
where $ M(r,f)=\max_{\mid z \mid=r}{\mid f(z) \mid}$ is maximum modulus of $f$. Using Wiman-Valiron theory, it is proved that the equation (\ref{sde}) has all solutions of finite order if and only if $A(z)$ and $B(z)$ are polynomials \cite{lainebook}. Therefore, if either $A(z)$ or $B(z)$ is transcendental entire function then almost all solutions of equation (\ref{sde}) are of infinite order. Therefore, it is natural to find conditions on $A(z)$ and $B(z)$ such that all solutions of the equation (\ref{sde}) are of infinite order.  Gundersen \cite{finitegg} proved the following result:

{\bf {Theorem A.}} 
A necessary condition for equation (\ref{sde}) to have a non-trivial solution $f$ of finite order is 
\begin{equation}\label{necc}
\rho(B)\leq \rho(A).
\end{equation} 
\\
We illustrate it with some examples,

\begin{example} $f(z)=e^{z}$ satisfies $f''+e^{z}f'-(e^{z}+1)f=0,$ where $\rho(A)=\rho(B)=1$.
\end{example}\label{eg1} 
\begin{example}\label{eg2}
With $A(z)=e^z$ and $B(z)=-1$ equation (\ref{sde}) has finite order solution  $f(z)=1-e^{-z}$, where $\rho(B)<\rho(A).$
\end{example}

Thus if $\rho(A)<\rho(B)$ then all solutions of the equation (\ref{sde}) are of infinite order. However, given necessary condition is not sufficient, for example
\\

\begin{example}\cite{heitt}
If $A(z)=P(z)e^{z}+Q(z)e^{-z}+R(z)$, where $ P, Q, R$ are polynomials and $B(z)$ is an entire function with $\rho(B)<1$ then $\rho(f)$ is infinite; for all non-trivial solutions of the equation (\ref{sde}).
\end{example}

Frei \cite{frei} showed that the differential equation
\begin{equation}\label{freieq}
f''+e^{-z}f'+B(z)f=0
\end{equation}
with $B(z)$ a polynomial, then the equation (\ref{freieq}) has a non-trivial solution $f$ of finite order if and only if $B(z)=-n^2, n \in \mathbb{N}$. Ozawa \cite{ozawa} proved that the equation (\ref{freieq}) with $B(z)=az+b, a\neq 0$ has all solution of infinite order. Amemiya and Ozawa \cite{ame}, and Gundersen \cite{ggpol} studied the equation (\ref{freieq}) for $B(z)$ being a particular polynomial. Langley \cite{lang}, proved that this is true for any non-connstant polynomial.
 Gundersen \cite{finitegg} proved the following result:
\begin{theorem}\label{thm1}
Let $f$ be a non-trivial solution of the equation (\ref{sde}) where either 
\begin{enumerate}[(i)]
\item $\rho(B)< \rho(A)<\frac{1}{2}$\\
or
\item $A(z)$ is transcendental entire function with $\rho(A)=0$ and $B(z)$ is a polynomial
\end{enumerate}
then $\rho(f)$ is infinite.
\end{theorem}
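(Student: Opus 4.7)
The plan is to argue by contradiction: assume $f$ is a non-trivial solution of (\ref{sde}) with $\rho(f) = \sigma < \infty$, then use Wiman-Valiron theory at the maximum modulus points of $f$ to derive an upper bound on $|A|$, and use the $\cos\pi\rho$ theorem to derive a lower bound on $|A|$; the two will be incompatible under either hypothesis (i) or (ii).

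First I would invoke the Wiman-Valiron asymptotic: there is a set $E \subset [1,\infty)$ of finite logarithmic measure such that, for every $r \notin E$ and every $z_r$ with $|z_r| = r$ and $|f(z_r)| = M(r,f)$,
$$\frac{f^{(k)}(z_r)}{f(z_r)} = \left(\frac{\nu(r,f)}{z_r}\right)^{k}(1 + o(1)), \qquad k = 1, 2,$$
where the central index satisfies $\nu(r,f) \leq r^{\sigma + \epsilon}$ for all sufficiently large $r$. Substituting these asymptotics into (\ref{sde}) at $z = z_r$, dividing by $f(z_r)$, and solving for $A(z_r)$ yields
$$A(z_r) = -\frac{\nu(r,f)}{z_r}(1 + o(1)) - B(z_r)\cdot\frac{z_r}{\nu(r,f)}(1 + o(1)),$$
so that, since $\nu(r,f) \geq 1$ eventually (as $f$ is transcendental),
$$|A(z_r)| \leq r^{\sigma - 1 + \epsilon} + r\,|B(z_r)|(1 + o(1))$$
for $r \notin E$ sufficiently large. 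In case (ii), where $B$ is a polynomial, this forces $|A(z_r)|$ to be polynomially bounded in $r$; in case (i), it gives $\log|A(z_r)| \leq r^{\rho(B)+\epsilon} + O(\log r)$.

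Next, since $\rho(A) < 1/2$ in case (i) or $\rho(A) = 0$ in case (ii), the $\cos\pi\rho$ theorem applies to $A$ and gives, for every $\epsilon > 0$ and all sufficiently large $r$,
$$\log m(r, A) \geq (\cos\pi\rho(A) - \epsilon)\,\log M(r, A),$$
where $m(r,A) = \min_{|z|=r} |A(z)|$. In case (ii), $\cos\pi\rho(A) = 1$ and $A$ is transcendental, so $\log m(r,A)/\log r \to \infty$, directly contradicting the polynomial bound on $|A(z_r)|$. In case (i), I would select a sequence $r_n \to \infty$ (outside $E$) along which $\log M(r_n, A) \geq r_n^{\rho(A) - \epsilon}$; then
$$\log|A(z_{r_n})| \geq c\,r_n^{\rho(A) - \epsilon},$$
with $c = \cos\pi\rho(A) - \epsilon > 0$, which is incompatible with $\log|A(z_{r_n})| \leq r_n^{\rho(B)+\epsilon} + O(\log r_n)$ once $\epsilon$ is chosen small enough that $\rho(B) + \epsilon < \rho(A) - \epsilon$.

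The main technical obstacle is the simultaneous selection of the sequence $r_n$: it must both realise the order growth of $A$ and lie outside the Wiman-Valiron exceptional set $E$. Since $E$ has finite logarithmic measure and the set $\{r : \log M(r,A) \geq r^{\rho(A)-\epsilon}\}$ is unbounded, a standard perturbation argument using the monotonicity of $M(r,A)$ produces such a sequence. The remaining steps are routine bookkeeping of the $(1+o(1))$ factors in the Wiman-Valiron expansion and verifying that the two terms in the expression for $A(z_r)$ do not accidentally cancel, which is automatic when $|B|$ is strictly smaller than $|A|$ in the relevant growth scale.
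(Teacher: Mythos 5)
The paper does not prove this statement: Theorem \ref{thm1} is quoted from Gundersen's 1988 paper \cite{finitegg} without proof, so your attempt has to be judged on its own. The overall strategy (Wiman--Valiron upper bound for $|A|$ at maximum-modulus points of $f$, minimum-modulus lower bound for $|A|$ from a $\cos\pi\rho$-type theorem) is a legitimate classical line of attack, and case (ii) essentially survives once the relevant theorem is stated correctly. But as written there is a genuine gap, concentrated in exactly the place you flag as ``routine.'' The $\cos\pi\rho$ theorem does \emph{not} assert $\log m(r,A)\geq(\cos\pi\rho(A)-\epsilon)\log M(r,A)$ for all sufficiently large $r$. A transcendental entire function of order less than $\tfrac12$ necessarily has infinitely many zeros, so $m(r,A)=0$ for an unbounded set of radii; the correct statement (Barry's theorem) only gives the inequality on a set of radii of positive lower logarithmic density, namely $\underline{\log\mathrm{dens}}\{r:\log m(r,A)>\cos(\pi\alpha)\log M(r,A)\}\geq 1-\rho(A)/\alpha$ for $\rho(A)<\alpha<\tfrac12$.

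This matters decisively in case (i). You must find $r_n\to\infty$ lying simultaneously (a) outside the Wiman--Valiron exceptional set, (b) in the Barry set, and (c) in $G=\{r:\log M(r,A)\geq r^{\rho(A)-\epsilon}\}$. Condition (a) is harmless (finite logarithmic measure), but (b) and (c) need not be compatible: since $\rho(A)$ is only a $\limsup$, the monotonicity/convexity of $\log M(r,A)$ guarantees merely that $G$ contains intervals $[r_n,\,r_n^{1+c}]$ with $c$ of order $\epsilon/\rho(A)$, i.e.\ $G$ may have upper logarithmic density as small as $O(\epsilon)$, while the complement of the Barry set can have upper logarithmic density up to roughly $2\rho(A)$. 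When $\rho(A)$ is close to $\tfrac12$ and $\rho(B)$ is close to $\rho(A)$ (which forces $\epsilon$ small), nothing in your argument prevents the Barry set from missing $G$ entirely; the ``standard perturbation argument'' does not exist in the generality you need. This intersection problem is precisely the hard core of Gundersen's theorem (and the reason the boundary case $\rho(A)=\tfrac12$ required the separate work of Hellerstein--Miles--Rossi); a correct proof needs a sharper minimum-modulus lemma comparing $A$ against $B$ on a set of infinite logarithmic measure, not the bare $\cos\pi\rho$ inequality plus the definition of order. Case (ii) is fine after repair: there $\rho(A)=0$, Barry's bound holds on a set of logarithmic density $1$, and $\log M(r,A)/\log r\to\infty$ for \emph{all} $r$ since $A$ is transcendental, so the contradiction with the polynomial bound on $|A(z_r)|$ goes through.
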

Hellerstein, Miles and Rossi \cite{heller} proved Theorem \ref{thm1} for $\rho(B)<\rho(A)=\frac{1}{2}.$
\\

J.R. Long introduced the notion of the deficient value and Borel direction into the studies of the equation (\ref{sde}). For the definition of deficient value, Borel direction and function extremal for Yang's inequality one may refer to \cite{yang}. 
\\

 In \cite{extremal}, J.R .Long proved that if $A(z)$  is an entire function extremal for Yang's inequality and $B(z)$  a transcendental entire function with $\rho(B)\neq \rho(A)$, then all solution of the equation (\ref{sde}) are of infinite order. In \cite{jlongfab}, J.R. Long replaced the condition $\rho(B) \neq \rho(A)$ with the condition that $B(z) $ is an entire function with \emph{Fabry gaps}.
\\

 X.B.Wu  \cite{wu} proved that if $A(z)$ is a non-trivial solution of $w''+Q(z)w=0$, where $Q(z)= b_mz^m+\ldots +b_0, \quad b_m \neq 0$ and $B(z)$ be an entire function with $\mu(B)<\frac{1}{2}+ \frac{1}{2(m+1)}$, then all solutions of equation (\ref{sde}) are of infinite order. J.R.  Long \cite{jlongfab} replaced the condition $\mu(B)< \frac{1}{2}+\frac{1}{2(m+1)}$ with $B(z)$ being an entire function with \emph{ Fabry gaps} such that $\rho(B) \neq \rho(A)$.
\\
Furthermore, J.R. Long \cite{jlong} proved the following Theorem:
\\

{\bf{Theorem B.}} Let $A(z)=d(z)e^{P(z)}$, where $d(z)( \not \equiv 0)$ is an entire function and $P(z)=a_nz^n +\ldots +a_0$ is a polynomial of degree $n$ such that $\rho(d)<n$. Let $B(z)=b_mz^m +\ldots +b_0$ be a non-constant polynomial of degree $m$, then all non-trivial solutions of the equation (\ref{sde}) have infinite order if one of the following condition holds:
\begin{enumerate}
\item $m+2 <2n$;
\item $m+2>2n$ and $m+2 \neq 2kn$ for all integers $k$; 
\item $m+2=2n$ and $\frac{a_n^2}{b_m}$ is not a negative real.
\end{enumerate}
Motivated by Theorem B, we consider entire functions $A(z)$ and $B(z)$ such that $\rho(A)>n$ and $B(z)$ a polynomial. To state and prove our theorem we give some definitions and notations below:
\begin{defn}\label{def1}\cite{jlong}
Let $P(z)=a_{n}z^n+a_{n-1}z^{n-1}+\ldots +a_0$, $a_n\neq0$ and $\delta(P,\theta)=\RE(a_ne^{\iota n \theta})$. 
A ray $\gamma = re^{\iota \theta}$  is called \emph{critical ray} of $e^{P(z)}$ if $\delta(P,\theta)=0.$ 
\end{defn}
It can be easily seen  that there are $2n$  different critical rays of $e^{P(z)}$ which divides the whole complex plane into $2n$ distinict sectors of equal length $\frac{\pi}{n}.$  Also $\delta(P,\theta)>0$ in $n$ sectors and $\delta(P,\theta)<0$ in remaining $n$ sectors. We note that $\delta(P,\theta)$ is alternatively positive and negative in the $2n$ sectors.\\
We now fix some notations, \\

 $ E^+ = \{ \theta \in [0,2\pi]: \delta(P,\theta)\geq 0\}$ and $E^- = \{ \theta \in [0,2\pi]: \delta(P,\theta)\leq 0 \}.$
\\
Let $\alpha>0$ and $\beta>0$ be such that $0\leq \alpha<\beta\leq 2\pi$ then 
\[S(\alpha,\beta)= \{z\in \mathbb{C}: \alpha<\arg z <\beta \}\]
We now recall the notion of an entire function to blow up and decay to zero exponentially \cite{wu}.
\begin{defn} \label{blowsup}
Let $A(z)$ be an entire function with order $\rho(A)\in (0,\infty)$. Then $A(z)$ \emph{blows up exponentially} in $\bar{S}(\alpha, \beta)$ if for any  $\theta\in S(\alpha, \beta)$ we get,
$$\lim_{r \rightarrow \infty} \frac{\log \log \mid A(re^{\iota \theta})\mid}{\log r} =\rho(A).$$

We say $A(z)$ \emph{decays to zero exponentially} in $\bar {S}(\alpha, \beta)$ if for any $\te \in S(\alpha, \beta)$ 
$$ \lim_{r \rightarrow \infty} \frac{\log \log \left | \frac{1}{A(r e^{\iota \te})}\right |}{\log r}=\rho(A)$$
\end{defn}

We illustrate these notions with an example

\begin{example}
The function $f(z)=e^z $  has two \emph{critical rays} namely $-\frac{\pi}{2}$ and $\frac{\pi}{2}$. It is easy to show that $f(z)$ \emph{ blows up exponentially} in $\bar{S}(-\frac{\pi}{2},\frac{\pi}{2})$ and  \emph{decays to zero exponentially} in $\bar{S}(\frac{\pi}{2}, \frac{3\pi}{2})$.

\end{example}
We are now able to  state our main theorem:
\begin{theorem}\label{main theorem}
Consider a transcendental entire function $A(z)=d(z)e^{P(z)}$,  where $P(z)$ is a non-constant polynomial of degree $n$ and $\rho(d)>n$. Assume that $d(z)$ is bounded away from zero and exponentially blows up in $E^+$ and $E^-$ respectively and let $B(z)$ be a polynomial. Then all non-trivial solutions of the equation (\ref{sde}) are of infinite order.
\end{theorem}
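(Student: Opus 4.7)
The plan is to argue by contradiction: suppose $f\not\equiv 0$ is a solution of (\ref{sde}) with $\rho(f)<\infty$. First I would invoke Gundersen's classical estimate for the logarithmic derivative: for any $\varepsilon>0$ there is a set $E\subset(1,\infty)$ of finite logarithmic measure such that
\[
\left|\frac{f^{(k)}(z)}{f(z)}\right|\le |z|^{k(\rho(f)-1)+\varepsilon},\qquad k=1,2,
\]
whenever $|z|\notin E$.

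The next step exploits the structure of $A$ sector by sector. On the interior of any component of $E^+$ one has $\delta(P,\theta)>0$, so $|e^{P(re^{i\theta})}|\ge \exp(\tfrac12\delta(P,\theta)r^n)$ for large $r$; since $d$ is bounded away from zero on $E^+$, this yields $|A(re^{i\theta})|\ge c_1\exp(c_2 r^n)$. On the interior of any component of $E^-$, by contrast, $e^P$ decays like $\exp(-c_3 r^n)$, but by hypothesis $d$ blows up exponentially in $E^-$ with $\rho(d)>n$, so $|d(re^{i\theta})|\ge \exp(r^{\rho(d)-\varepsilon})$ and therefore $|A(re^{i\theta})|\ge \exp(\tfrac12 r^{\rho(d)-\varepsilon})$ for large $r$. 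Thus $A$ blows up exponentially on rays in the interiors of \emph{all} $2n$ sectors determined by the critical rays of $e^P$.

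Rewriting (\ref{sde}) as $A(z)\,(f'/f)(z) = -(f''/f)(z) - B(z)$ and combining the exponential lower bounds on $|A|$ with the polynomial upper bounds from Gundersen's lemma (and the polynomial bound on $B$), I obtain, on every ray $\arg z = \theta$ in the interior of $E^+\cup E^-$ and for $|z|=r\notin E$, an estimate of the form
\[
\left|\frac{f'(re^{i\theta})}{f(re^{i\theta})}\right| \le r^M \exp(-c r^s)
\]
for some $s,c,M>0$ depending on the sector. Integrating $f'/f$ along such a ray then shows that $|f(re^{i\theta})|$ stays bounded as $r\to\infty$ on every compact sub-sector lying strictly inside one of the $2n$ sectors.

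Finally, to handle the critical rays themselves, I would pick $\eta>0$ so small that $2\eta<\pi/\rho(f)$ and apply the Phragm\'en--Lindel\"of principle to $f$ in each narrow sector of opening $2\eta$ around a critical ray: on its two boundary rays $|f|$ is bounded by the previous step, and $f$ has order at most $\rho(f)<\pi/(2\eta)$, so $|f|$ is bounded throughout this sector. Patching together the compact-subsector bounds and these narrow neighbourhoods of the critical rays makes $f$ a bounded entire function, hence a constant $c$; substitution into (\ref{sde}) forces $cB(z)\equiv 0$, so $c=0$, contradicting $f\not\equiv 0$. The main technical obstacle I expect is the careful management of the exceptional set $E$ when integrating $f'/f$ along rays: one has to arrange the integration, for example via sequences $r_k\to\infty$ with $r_k\notin E$ combined with a coarse Gundersen bound on the bad set, so that the exponentially small estimate actually yields a finite limit for $\log|f|$ despite failing on $E$.
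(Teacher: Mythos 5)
Your strategy coincides with the paper's: assume $\rho(f)<\infty$, bound $|A|$ from below exponentially along rays of both $E^+$ (where $e^{P}$ blows up and $d$ is bounded away from zero) and $E^-$ (where the hypothesis $\rho(d)>n$ lets $d$ overpower the decay of $e^{P}$), deduce from the rewritten equation that $f'/f$ is exponentially small along those rays, conclude that $f$ tends to a finite limit on each of them, and finish with Phragm\'en--Lindel\"of and Liouville. Two points of divergence are worth noting. First, you invoke the version of Gundersen's logarithmic-derivative estimate with an exceptional set of finite logarithmic measure in $r$, which is exactly what creates the integration difficulty you flag at the end; the paper instead uses the ray-wise version (the estimate holds for all $r>R(\theta)$ once $\theta$ avoids a set of angles of linear measure zero) and then cites Gundersen's companion lemma that $|f'/f|=O(|z|^{-\alpha})$ with $\alpha>1$ along a ray forces $f\to c\neq 0$ on that ray --- so your ``main technical obstacle'' disappears entirely if you switch to that form of the lemma. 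Second, your Phragm\'en--Lindel\"of step, using sectors of opening $2\eta<\pi/\rho(f)$ bounded by good rays, is actually more careful than the paper's (which rather vaguely applies the maximum modulus principle ``over the domain $E^+$''); you should, however, apply the same narrow-sector covering inside the $2n$ open sectors as well, since the lower bound on $|A|$ in $E^-$ coming from the blow-up of $d$ is a priori only pointwise in $\theta$ and does not by itself give uniform boundedness of $f$ on closed sub-sectors. With those adjustments your argument is sound and reaches the same contradiction: $f$ is a bounded entire function, hence a constant, hence identically zero because $B\not\equiv 0$.
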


The paper is organised as follows: in Section 2, we have stated preliminary lemmas and proved some required results. In Section 3, we have proved Theorem \ref{main theorem}. 
\\

\section{Auxiliary Result}
In this section, we present some known results. Next two lemmas are due to Gundersen which has been used extensively over the years.
\\

\begin{lemma}\cite{log gg} \label{ggllog}
Let $f$ be a trancendental meromorphic function with finite order and $(k,j)$ be a pair of integers that satisfies $k>j\geq0$. Then for $\epsilon>0$ there exists a set $E \subset [0,2\pi]$ with linear measure zero such that for $\theta \in [0,2\pi) \setminus E$ there exist $R(\theta)>1$ such that 
\begin{equation}\label{gglog}
\left| \frac{f^{(k)}(z)}{f^{(j)}(z)}\right | \leq \mid z \mid^{(k-j)(\rho(f)-1+\epsilon)}
\end{equation} for$\mid z \mid >R(\theta)$ and $\arg z =\theta.$

\end{lemma}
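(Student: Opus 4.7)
The natural strategy is to reduce the general pair $(k,j)$ to the base pair $(1,0)$ by telescoping and then prove the base case using the Poisson--Jensen representation of $f'/f$. For the reduction, I would write
\[
\frac{f^{(k)}(z)}{f^{(j)}(z)} = \prod_{i=j}^{k-1} \frac{f^{(i+1)}(z)}{f^{(i)}(z)},
\]
and apply the $(1,0)$-estimate to each of the $k-j$ meromorphic functions $f^{(i)}$, all of which share the finite order $\rho(f^{(i)}) = \rho(f)$. The union of the $k-j$ exceptional directional sets is still of linear measure zero, and with the same $\epsilon$ used in each factor one obtains the combined bound $|z|^{(k-j)(\rho(f)-1+\epsilon)}$ claimed in the statement.

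For the base case $(k,j)=(1,0)$, my plan is to start from the Poisson--Jensen formula for $f$ on the disc $|w| \leq R$ with $R = 2|z|$. Differentiating in $z$ yields the representation
\begin{equation*}
\frac{f'(z)}{f(z)} = \frac{1}{2\pi}\int_0^{2\pi} \log|f(Re^{i\phi})|\, \frac{2Re^{i\phi}}{(Re^{i\phi}-z)^2}\,d\phi + \sum_{|a_\nu|<R}\left(\frac{1}{z-a_\nu} - \frac{\bar a_\nu}{R^2 - \bar a_\nu z}\right) - \sum_{|b_\mu|<R}\left(\frac{1}{z-b_\mu} - \frac{\bar b_\mu}{R^2 - \bar b_\mu z}\right),
\end{equation*}
where $\{a_\nu\}$ and $\{b_\mu\}$ are the zeros and poles of $f$ in $|w|<R$. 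Since $R-|z|=|z|$, the Poisson kernel factor is $O(1/|z|)$ and the integral of $\log^+|f|$ is controlled by $m(R,f) \leq T(R,f)$; the finite-order estimate $T(R,f) \leq R^{\rho(f)+\epsilon/2}$ then yields the desired contribution $O(|z|^{\rho(f)-1+\epsilon})$ from the integral term.

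The main difficulty lies in controlling the two sums, since $|z-a_\nu|^{-1}$ or $|z-b_\mu|^{-1}$ can be arbitrarily large when $z$ passes near a zero or pole. The standard device I would use is to excise from $[0,2\pi)$ a small set of directions that come too close to the zeros and poles of $f$. Concretely, fix $\tau > \rho(f)+\epsilon$; for each zero $a_\nu$ with $|a_\nu|\leq R$, remove the arc of directions $\theta$ for which the ray $\arg z=\theta$ enters the disc $\{|w-a_\nu|<|a_\nu|^{-\tau}\}$, and do the same for the poles. On each dyadic scale $R_n=2^n$, the counting estimate $n(R_n,f)=O(R_n^{\rho(f)+\epsilon/4})$ combined with the excluded radius $R_n^{-\tau}$ gives a total angular measure that is summable in $n$; a Borel--Cantelli argument then produces an exceptional set $E\subset[0,2\pi]$ of linear measure zero outside of which there exists $R(\theta)>1$ such that $|z-a_\nu|,|z-b_\mu|\geq|z|^{-\tau}$ for all zeros and poles with $|a_\nu|,|b_\mu|<2|z|$ and all $|z|>R(\theta)$. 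Substituting these lower bounds into the sums and using $n(R,f)\leq R^{\rho(f)+\epsilon/2}$ to bound the number of terms yields $|f'(z)/f(z)|\leq|z|^{\rho(f)-1+\epsilon}$ on the ray $\arg z=\theta$, completing the base case and, via the telescoping of the first paragraph, the lemma.
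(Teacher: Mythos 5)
The paper does not actually prove this lemma; it is quoted verbatim from Gundersen's 1988 paper, so there is no internal proof to compare against. Your overall architecture is the standard and correct one: telescoping $f^{(k)}/f^{(j)}$ into the logarithmic derivatives of the functions $f^{(i)}$, each of which is transcendental meromorphic of order $\rho(f)$, and then attacking the base case through the differentiated Poisson--Jensen formula with $R=2|z|$. The treatment of the integral term is essentially fine, modulo two small points: the kernel attached to a zero should read $\frac{1}{z-a_\nu}+\frac{\bar a_\nu}{R^2-\bar a_\nu z}$ (plus sign), and to control $\int\bigl|\log|f|\bigr|$ you need $m(R,1/f)\le T(R,f)+O(1)$ in addition to $m(R,f)\le T(R,f)$.

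The genuine gap is in the final step, where you bound the sums over zeros and poles. Your exceptional set yields only the uniform lower bound $|z-c_\nu|\ge |z|^{-\tau}$ with $\tau>\rho(f)+\epsilon$, for the $n(2|z|,f)+n(2|z|,1/f)\le |z|^{\rho(f)+\epsilon/2}$ relevant points $c_\nu$; feeding this into the sum gives only $\sum_\nu |z-c_\nu|^{-1}\le |z|^{\rho(f)+\tau+\epsilon/2}$, which overshoots the target $|z|^{\rho(f)-1+\epsilon}$ by roughly a factor $|z|^{\tau+1}$. Shrinking $\tau$ does not rescue this, because the Borel--Cantelli sum of angular measures requires $\tau+1$ to exceed the exponent of convergence of the $c_\nu$. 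Moreover the worst case is genuinely attainable by your construction: about $r^{\rho}$ zeros clustered in a disc of radius $r^{-\tau}$ whose centre sits at angular distance just over $r^{-\tau-1}$ from $\theta$ escape your excision yet each contribute about $r^{\tau}$ to the sum, so a term-by-term worst-case estimate cannot give the lemma. What is actually needed is a scale-by-scale count: for almost every direction, only $O(n(2r)\,d/r)$ of the $c_\nu$ lie within distance $d$ of the point $z$, for each dyadic $d$, so that each scale contributes $O(n(2r)/r)=O(r^{\rho(f)-1+\epsilon/2})$ and the $O(\log r)$ scales sum to the claimed bound; proving this uniformly in $r$ for almost every $\theta$ (for instance by integrating the sum over $\theta$ and running a maximal/Borel--Cantelli argument over dyadic radii) is the real content of Gundersen's argument and is missing from yours. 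As written, the proposal does not establish the lemma.
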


\begin{lemma}\cite{finitegg}
Let $f$ be analytic on a ray $\gamma=re^{\iota \theta}$ and suppose that for some constant $\alpha>1$ we have 
$$ \left | \frac{f'(z)}{f(z)}\right |=O(\mid z\mid^{-\alpha})$$ 
as $z\rightarrow \infty$ along $\arg z =\theta.$ Then there exists a constant $c\neq0$ such that $f(z)\rightarrow c$ as $z\rightarrow \infty $ along $\arg z =\theta.$
\end{lemma}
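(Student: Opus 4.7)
My plan is to pass to the real-variable function $g(r) := f(re^{\iota \theta})$ along the ray and show that $\log g$ has a finite limit at infinity, so that $g$ itself converges to a nonzero constant. The key mechanism is that the hypothesis $|f'/f| = O(|z|^{-\alpha})$ with $\alpha > 1$ makes $f'/f$ absolutely integrable along the ray near infinity.

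First, I would observe that $f$ cannot vanish on the tail of the ray: at a zero $z_0$ of $f$ lying on $\arg z = \theta$, the quotient $f'/f$ would be singular at $z_0$, incompatible with the estimate $|f'(z)/f(z)| \leq C|z|^{-\alpha}$ at every point $z$ on the ray with $|z|$ sufficiently large. Hence there exists $r_0 > 0$ such that $f$ is zero-free on $\{re^{\iota \theta} : r \geq r_0\}$, and on this simply connected set a continuous branch $L(r)$ of $\log g(r)$ can be chosen.

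Second, differentiating gives $L'(r) = e^{\iota \theta}\, (f'/f)(re^{\iota \theta})$, so the fundamental theorem of calculus yields
$$L(r) - L(r_0) = \int_{r_0}^{r} e^{\iota \theta}\, \frac{f'(se^{\iota \theta})}{f(se^{\iota \theta})}\, ds.$$
The hypothesis bounds the integrand in modulus by $Cs^{-\alpha}$, and since $\alpha > 1$ the improper integral $\int_{r_0}^{\infty} s^{-\alpha}\, ds$ converges. Therefore the right-hand side converges absolutely as $r \to \infty$, so $L(r) \to L_\infty$ for some finite $L_\infty \in \mathbb{C}$. Exponentiating, $g(r) = e^{L(r)} \to c := e^{L_\infty}$, and $c \neq 0$ precisely because $L_\infty$ is finite (not $-\infty$).

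The main technical point is really the zero-avoidance step at the start, which is needed before one can coherently form $\log g$ or $f'/f$ on the full tail of the ray; once a suitable $r_0$ is fixed, the remainder is a one-line integrability estimate, and the strict inequality $\alpha > 1$ is used at exactly the point where $\int s^{-\alpha}\, ds$ must converge at infinity — the conclusion would fail for $\alpha \leq 1$.
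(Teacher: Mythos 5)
The paper gives no proof of this lemma---it is quoted verbatim from Gundersen \cite{finitegg}---so there is no internal argument to compare against. Your proposal is correct and is essentially the standard proof from that reference: zero-freeness of $f$ on the tail of the ray (forced by the boundedness of $f'/f$ there), a continuous logarithm $L(r)$ of $f(re^{\iota\theta})$ with $L'(r)=e^{\iota\theta}f'(re^{\iota\theta})/f(re^{\iota\theta})$, and absolute convergence of $\int_{r_0}^{\infty}s^{-\alpha}\,ds$ for $\alpha>1$, which gives a finite limit $L_{\infty}$ and hence $f(re^{\iota\theta})\rightarrow e^{L_{\infty}}\neq 0$.
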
\label{ggfinite}

We now prove a result which would be required for proving Theorem \ref{main theorem}.

\begin{lemma}\label{my lemma}
Let $A(z)=d(z)e ^{P(z)}$ be an entire function, where $P(z)$ is a polynomial of degree $n$ and $d(z)$ satisfies the condition of Theorem \ref{main theorem}.  Then there exists a set $E \subset [0,2\pi]$ of linear measure zero such that 
 for $\epsilon >0$ the following holds:
\begin{enumerate}[(i)]
\item for $\theta \in E^+ \setminus E$ there exists $R(\theta)>1$ such that \\
$$\mid A(re^{\iota \theta})\mid \geq \exp \big((1-\epsilon) \delta(P,\theta) r^n \big)$$
for $r>R(\theta)$
\item for  $\theta\in E^- \setminus E$ there exists $R(\theta)>1$ such that 
$$\mid A(re^{\iota \theta})\mid \geq \exp \big((1-\epsilon) \delta(P,\theta)r^n \big)$$
for $r>R(\theta).$

\end{enumerate}
\end{lemma}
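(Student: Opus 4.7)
The plan is to exploit the standard asymptotic expansion of $P(re^{\iota\theta})$ and treat the two regimes $E^+$ and $E^-$ separately, invoking the corresponding hypothesis on $d(z)$ in each. Writing $P(z) = a_n z^n + a_{n-1}z^{n-1} + \ldots + a_0$, we have
$$ \RE P(re^{\iota\theta}) = \delta(P,\theta) r^n + O(r^{n-1}) $$
as $r \to \infty$, with the implicit constant depending only on the coefficients of $P$, and therefore $|e^{P(re^{\iota\theta})}| = \exp\!\bigl(\delta(P,\theta) r^n + O(r^{n-1})\bigr)$. I would take the exceptional set $E$ to consist of the $2n$ critical rays (on which $\delta(P,\theta)=0$) together with any measure-zero exceptional set forced by part (ii); the total is still of linear measure zero.

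For part (i), fix $\theta \in E^+ \setminus E$, so that $\delta(P,\theta) > 0$ strictly. Since $d$ is bounded away from zero on $E^+$, there is a constant $c>0$ with $|d(re^{\iota\theta})| \geq c$ for all large $r$. Multiplying the two estimates gives
$$ |A(re^{\iota\theta})| \geq c \exp\!\bigl(\delta(P,\theta) r^n + O(r^{n-1})\bigr). $$
For $r$ sufficiently large (depending on $\theta$), both the $O(r^{n-1})$ error and $\log c$ are absorbed by $\epsilon\,\delta(P,\theta)\, r^n$, which yields the stated bound.

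For part (ii), fix $\theta \in E^- \setminus E$. Since $d$ blows up exponentially on $E^-$ with $\rho(d) > n$, I can choose $\epsilon_0 > 0$ with $\rho(d) - \epsilon_0 > n$ and an $R(\theta) > 1$ such that $|d(re^{\iota\theta})| \geq \exp\!\bigl(r^{\rho(d) - \epsilon_0}\bigr)$ for $r > R(\theta)$. Combining with the expansion above and using $\delta(P,\theta) \leq 0$,
$$ \log |A(re^{\iota\theta})| \geq r^{\rho(d) - \epsilon_0} + \delta(P,\theta) r^n + O(r^{n-1}). $$
The first term dominates, so $\log |A(re^{\iota\theta})| \to \infty$, and in particular $\log|A(re^{\iota\theta})| \geq 0$ for $r$ large. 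Since $(1-\epsilon)\delta(P,\theta) r^n \leq 0$ throughout $E^-$, the required inequality follows trivially.

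The proof is essentially bookkeeping; there is no serious obstacle. The two points that must be handled carefully are the exclusion of the $2n$ critical rays from $E^+$ in part (i), so that $\delta(P,\theta)$ is strictly positive and the lower-order terms can be absorbed, and the choice of $\epsilon_0$ in part (ii) so that $r^{\rho(d)-\epsilon_0}$ strictly dominates the $r^n$ contribution coming from $|e^P|$.
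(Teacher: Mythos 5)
Your proof is correct and follows essentially the same route as the paper: isolate the leading term of $P$ so that $|e^{P(re^{\iota\theta})}| = \exp\big(\delta(P,\theta)r^n + O(r^{n-1})\big)$, bound $|d|$ from below using the bounded-away-from-zero hypothesis on $E^+$ and the exponential blow-up with $\rho(d)>n$ on $E^-$, and take $E$ to be the $2n$ critical rays. The only cosmetic difference is in part (ii), where the paper converts the blow-up into the lower bound $\exp(-\epsilon\,\delta(P,\theta)r^n)$ for the non-leading factor and multiplies, while you note that $|A(re^{\iota\theta})|\to\infty$ makes the required inequality trivial since its right-hand side is at most $1$ on $E^-\setminus E$; both are valid.
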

\begin{proof}
Here $A(z)=d(z)e^{P(z)}=h(z)e^{a_nz^n}$, where $h(z)$ is also an entire function. Let $E=\{\theta\in[0,2\pi]: \delta(P,\theta)=0\}$. This means that $E$ is set of critical rays of $e^{P(z)}$, which implies that $E$ has linear measure zero. Let $\epsilon>0$. Then
\begin{enumerate}[(i)]
 \item for $\delta(P,\theta)>0$,  $\exp(-\epsilon \delta(P,\theta)r^n)\rightarrow 0$ as $r \rightarrow \infty$. Thus there exists $R(\theta)>1$ such that 

\begin{equation}\label{eqh}
\exp(-\epsilon \delta(P,\theta)r^n) \leq \mid h(re^{\iota \theta}) \mid 
\end{equation}
for $r>R(\theta)$. Now

\begin{equation}\label{eqA}
 \mid \exp(a_n(re^{\iota \theta})^n)\mid=\exp(\delta(P,\theta)r^n)
\end{equation}
 Thus using (\ref{eqh}) and (\ref{eqA}) we have
$$ \mid A(re^{\iota \theta}) \mid =\mid h(re^{\iota \theta})\mid \mid \exp(a_n (re^{\iota \theta})^n)\mid \geq \exp \big((1-\epsilon) \delta(P,\theta)r^n \big) $$
for $\theta \in E^+ \setminus E $ and $ r>R(\theta).$ 
\item Since $d(z)$ blows up exponentially in $E^-$ therefore, $h(z)$ also blows up exponentially in $E^-$. Let $\epsilon >0$ and $\delta(P,\theta)<0$. Then $\rho(\exp(-\epsilon a_n z^n))=n<\rho(h)$. Thus, using definition \ref{blowsup}, for any $\theta\in E^ - \setminus E$  there exists $R(\theta)>1$ such that  
\begin{equation}\label{equh}
\mid h(re^{\iota \theta})\mid\geq \exp(-\epsilon \delta(P,\theta)r^n)
\end{equation}
for $r>R(\theta)$. Using equation (\ref{eqA}) and (\ref{equh}) we obtain that
$$\mid A(re^{\iota \theta})\mid \geq \exp \big((1-\epsilon)\delta(P,\theta)r^n \big)$$
for $r>R(\theta)$ and $\theta\in E^- \setminus E.$

\end{enumerate}
\end{proof}

\section{Proof of Main Theorem}
In this section we will establish Theorem \ref{main theorem} which is the main result of this paper. 
\begin{proof}If $\rho(A) = \infty$ then it is obvious that $\rho(f)=\infty$, for all non-trivial solution $f$ of the equation (\ref{sde}).Therefore, let us suppose that $\rho(A)<\infty$ and there exists a non-trivial solution $f$ of the equation (\ref{sde}) such that $\rho(f)<\infty$. Then from Lemma \ref{ggllog}, we have that there exist $E_1 \subset [0,2\pi]$ of linear measure zero and  $m>0$ such that,
\begin{equation}\label{eq1proof}
\left | \frac{f''(re^{\iota\theta})}{f(re^{\iota \theta})} \right | \leq r^{m}
\end{equation}
for $\theta \in [0,2\pi] \setminus E_1$ and $r>R(\theta)$.  From Lemma \ref{my lemma}, part (i)  we have, 
\begin{equation}\label{eq2proof}
 \mid A(re^{\iota \theta}) \mid \geq \exp \left(\frac{1}{2} \delta(P,\theta) r^n \right )
\end{equation} 
for $\theta \in E^+ \setminus E_2$ and $r> R'(\theta)$ where $E_2 $ is set of critical rays of $e^{P(z)}$ of linear measure zero. 
Using equation (\ref{sde}), (\ref{eq1proof}) and (\ref{eq2proof}), for $\theta \in E^+ \setminus \big( E_1\cup E_2 \big)$ we get,
$$ \left | \frac{f'(re^{\iota \theta})}{f(re^{\iota \theta})}\right | \rightarrow 0$$
as $r\rightarrow \infty$. This implies that for  $\theta \in E^+ \setminus \big( E_1  \cup E_2 \big)$ 
\begin{equation}\label{eq3proof}
\left| \frac{f'(re^{\iota\theta})}{f(re^{\iota \theta})} \right | = O\left (\frac{1}{r^2} \right )
\end{equation}
  as $r \rightarrow \infty$. From Lemma \ref{ggfinite},
 \begin{equation}\label{eq}
f(re^{\iota \theta}) \rightarrow a
\end{equation}
as $r \rightarrow \infty$,  for $\theta \in E^+ \setminus \big( E_1 \cup E_2 \big) $,  where $a$ is a non-zero finite constant. Applying Maximum Modulus principle for the function $f$ over the domain $E^+$  this can be concluded that $f$ is bounded over $E^+$. Now using Phragm$\acute{e}$n- Lindel$\ddot{o}$f principle, 
\begin{equation}\label{eq5proof}
f (re^{\iota \theta})\rightarrow a
\end{equation}
as $r \rightarrow \infty$, for $\theta \in E^+$. 
\\
Lemma \ref{my lemma}, part (ii) implies that, 
\begin{equation}\label{eq4proof}
 \mid A(re^{\iota \theta}) \mid \geq \exp \left (-\frac{1}{2} \delta(P,\theta) r^n \right)
\end{equation} 
for $\theta \in E^- \setminus E_2$ and for $r>R''(\theta)$. Using equation (\ref{sde}), (\ref{eq1proof}) and (\ref{eq4proof}) we have,
\begin{equation}\label{eq6proof}
\left| \frac{f'(re^{\iota\theta})}{f(re^{\iota\theta})} \right| \rightarrow 0
\end{equation}  
as $r\rightarrow \infty$, for $\theta \in E^- \setminus \big(E_1 \cup E_2\big)$. From here we can obtain equations (\ref{eq3proof}) for $\theta \in E^- \setminus \big( E_1 \cup E_2 \big)$. Again using Maximum Modulus principle for function $f$ over the domain $E^-$  we get $f$  is bounded over $E^-$. Which further using Phragm$\acute{e}$n-Lindel$\ddot{o}$f principle implies,
\begin{equation}\label{eq7proof}
 f(re^{\iota \theta}) \rightarrow b
\end{equation} 
as $r \rightarrow \infty$, for $\theta \in E^-$, where $b$ is a non-zero finite constant.
Again using Phragm$\acute{e}$n-Lindel$\ddot{o}$f principle we get, 
\begin{equation}
 f(re^{\iota \theta}) \rightarrow a
\end{equation}  
as $r\rightarrow \infty$, for all $\theta \in E^+ \cup E^- $, which is a contradiction to the Liouville's theorem.
\end{proof}
{\bf{Acknowlegement:}} We are thankful to Professor Gundersen for reading the paper and suggesting many things. In fact, he has asked to construct an example in support of the main theorem. We are still working on construction of such type of example. We invite readers for their comments. Paper is not for publication in any academic journal as of now.

\end{document}